\documentclass[a4paper,leqno,12pt,draft]{amsart}
\usepackage{amsmath}
\usepackage{amssymb}
\usepackage{amsfonts}
\usepackage{amsthm}
\usepackage{enumerate}
\usepackage{comment}
\usepackage{cite}
\usepackage[dvips]{graphicx,color}
\usepackage{delarray}
\usepackage{ascmac}
\usepackage{fancybox}
\usepackage{lineno}
\usepackage[hidelinks,draft=false]{hyperref}
\setlength{\textwidth}{14.5cm}
\setlength{\textheight}{23cm}
\setlength{\topmargin}{-1cm}
\setlength{\oddsidemargin}{0.75cm}

\theoremstyle{plain}
\newtheorem{theorem}{Theorem}[section]
\newtheorem{corollary}[theorem]{Corollary}
\newtheorem{definition}[theorem]{Definition}
\newtheorem{proposition}[theorem]{Proposition}

\newtheorem{remark}[theorem]{Remark}

\numberwithin{equation}{section}

\numberwithin{equation}{section}

\def\XXint#1#2#3{{\setbox0=\hbox{$#1{#2#3}{\int}$}
\vcenter{\hbox{$#2#3$}}\kern-.5\wd0}}

\begin{document}
\title[Regularity of extremal solutions]{Regularity of extremal solutions of semilinear elliptic equations with \texorpdfstring{$m$}{Lg}-convex nonlinearities}
\author{Kenta Kumagai}
\address{Department of Mathematics, Tokyo Institute of Technology}
\email{kumagai.k.ah@m.titech.ac.jp}
\date{\today}

\begin{abstract}
We consider the Gelfand problem in a bounded smooth domain $\Omega\subset \mathbb{R}^N$ with the Dirichlet boundary condition. We are interested in the boundedness of the extremal solution $u^*$. When the dimension $N\ge10$, it is known that a singular extremal solution can be constructed for the nonlinearity $f(u)=e^u$ and $\Omega=B_1$. When $3\le N\le 9$, Cabr\'e, Figalli, Ros-Oton, and Serra (2020) proved the following surprising result: the extremal solution $u^*$ is bounded if the nonlinearity $f$ is positive, nondecreasing, and convex.

In this paper, we succeed in generalizing their result to general $m$-convex nonlinearities. Moreover, we give a unified viewpoint on the results of previous studies by considering $m$-convexity. We provide a closedness result for stable solutions with $m$-convex nonlinearities. As a consequence, we provide a Liouville-type result and by using a blow-up argument, we prove the boundedness of extremal solutions.
\end{abstract}
\keywords{Semilinear elliptic equation, Extremal solution, Boundedness, Stable solution}
    \subjclass[2020]{35K57, 35B65, 35B35}

\maketitle

\raggedbottom

\section{Introdution}
Let $\Omega$ be a bounded domain of class $C^3$ and $f:\mathbb{R}\to\mathbb{R}$ be a positive, nondecreasing, locally Lipschitz, and superlinear function in the sense that
\begin{align}
\lim_{u\to \infty}\frac{f(u)}{u} = \infty. \notag
\end{align}
We consider the semilinear elliptic problem
\begin{equation}
\label{gelfandf}
\left\{
\begin{alignedat}{4}
 -\Delta u_\lambda&=\lambda f(u_{\lambda})&\hspace{2mm} &\text{in } \Omega,\\
u_\lambda&>0  & &\text{in } \Omega, \\
u_\lambda&=0  & &\text{on } \partial \Omega,
\end{alignedat}
\right.
\end{equation}
where $\lambda$ is a positive parameter.

Equation (\ref{gelfandf}) is known as the Gelfand problem. It was first studied by Barenblatt in relation to combustion theory in a volume edited by Gelfand \cite{Gelf}. Later, it was studied by many authors. 
We say that a solution $u_\lambda \in C^2_0(\Omega)$ of (\ref{gelfandf}) is stable if
\begin{align}
\int_{\Omega}\lambda f'(u_{\lambda})\xi^2\,dx\le\int_{\Omega}|\nabla\xi|^2\,dx\hspace{6mm}\text{for all $\xi \in C^\infty_0(\Omega)$}.\notag
\end{align}
If we define the associated energy $E$ as 
\begin{align}
E(u_\lambda):=\int_{\Omega}\left(\frac{|\nabla u_{\lambda}|^2}{2}-\lambda F(u_\lambda)\right) \,dx, \notag
\end{align}
where $F(u):=\int_{0}^{u}f(s)\,ds$, then the stability of a solution $u$ is interpreted as the nonnegativity of the second variation $E''$ at the critical point $u$. In paticular, any local minimizer of $E$ is a stable solution. 

The following theorem is the fundamental result to deal with this problem.

\begin{theorem}[see \cite{B,Br,Dup}]
Let $f$ be a positive, nondecreasing, locally Lipschitz, and superlinear function. Then there exists a constant $\lambda^* = \lambda^*(\Omega,N,f) \in(0,\infty)$ such that
\begin{itemize}
\item For $0<\lambda<\lambda^*$, there exists a minimal classical solution $u_\lambda\in C^2(\overline{\Omega})$ of $(\ref{gelfandf})$.
In particular, $u_\lambda$ is stable and $u_\lambda<u_{\lambda'}$ for $\lambda<\lambda'$.
\item For $\lambda>\lambda^*$, there exists no classical solution $u\in C^2(\overline{\Omega})$ of $(\ref{gelfandf})$.
\item For $\lambda=\lambda^*$, we define $u^*:=\lim_{\lambda \uparrow \lambda^* } u_\lambda$. Then the function $u^*$ is an $L^1$-weak solution in the sense that $u^*\in L^1(\Omega)$, $f(u^*)\mathrm{dist}(\cdot,\partial \Omega)\in L^1(\Omega)$, and 
\begin{equation}
-\int_{\Omega}u^*\Delta \xi \,dx = \int_{\Omega}\lambda^*f(u^*)\xi \,dx\hspace{8mm} \text{for all}\hspace{2mm}\xi\in C^2_0(\overline{\Omega}).\notag
\end{equation}
This solution is called the extremal solution of $(\ref{gelfandf})$.
\end{itemize} 
\end{theorem}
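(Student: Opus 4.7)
The overall plan is to set
\[
\lambda^* := \sup\{\lambda > 0 : (\ref{gelfandf}) \text{ admits a classical solution}\}
\]
and then establish the three bulleted claims in turn. To see $\lambda^*>0$, I would let $v$ solve the torsion problem $-\Delta v = 1$ in $\Omega$ with $v=0$ on $\partial\Omega$; since $v\in C^2(\overline\Omega)$ is bounded, for $\lambda$ small enough $\lambda f(v)\le 1$, making $v$ a classical supersolution. Starting the monotone iteration $-\Delta u_{n+1} = \lambda f(u_n)$ from $u_0 = 0$ then yields an increasing sequence bounded above by $v$, and Schauder theory gives convergence in $C^2(\overline\Omega)$ to a classical solution. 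For $\lambda^*<\infty$, I would multiply $(\ref{gelfandf})$ by the first Dirichlet eigenfunction $\varphi_1>0$ with eigenvalue $\mu_1(\Omega)$ and integrate by parts to obtain $\mu_1(\Omega)\int u\varphi_1\,dx = \lambda\int f(u)\varphi_1\,dx$; superlinearity ($f(u)/u\to\infty$) forces $\lambda$ to remain bounded.

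For $0<\lambda<\lambda^*$, I would pick $\lambda'\in(\lambda,\lambda^*)$ for which a classical solution $u_{\lambda'}$ exists. This $u_{\lambda'}$ is a supersolution at parameter $\lambda$, so the same monotone iteration from $0$ up to $u_{\lambda'}$ produces a classical solution $u_\lambda$. Minimality is immediate because every classical solution of the $\lambda$-problem is itself an admissible supersolution. The strict inequality $u_\lambda<u_{\lambda'}$ for $\lambda<\lambda'$ follows from the strong maximum principle applied to $u_{\lambda'}-u_\lambda\ge 0$, which satisfies $-\Delta(u_{\lambda'}-u_\lambda) = \lambda(f(u_{\lambda'})-f(u_\lambda)) + (\lambda'-\lambda)f(u_{\lambda'}) > 0$. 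Non-existence for $\lambda>\lambda^*$ is built into the definition of $\lambda^*$.

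Stability of $u_\lambda$ is the subtlest step, and I expect it to be the main obstacle. Suppose for contradiction that the first eigenvalue $\nu_1$ of $-\Delta - \lambda f'(u_\lambda)$ on $H_0^1(\Omega)$ is strictly negative, with positive eigenfunction $\phi$. A Taylor expansion gives
\[
-\Delta(u_\lambda - \varepsilon\phi) - \lambda f(u_\lambda - \varepsilon\phi) = -\varepsilon\nu_1\phi + O(\varepsilon^2\phi^2),
\]
so for $\varepsilon$ sufficiently small $u_\lambda - \varepsilon\phi$ is a strict classical supersolution of the $\lambda$-problem that lies strictly below $u_\lambda$. Iterating monotonically from $0$ up to this new supersolution produces a classical solution smaller than $u_\lambda$, contradicting minimality. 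The care here lies in controlling the quadratic remainder against the sign of $\nu_1$ uniformly up to $\partial\Omega$, which relies on Hopf's lemma to compare $\phi$ and $\phi^2$ near the boundary, and on knowing $f$ well enough to make sense of $f'$ along $u_\lambda$.

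Finally, for $\lambda=\lambda^*$, monotonicity gives a well-defined pointwise limit $u^* := \lim_{\lambda\uparrow\lambda^*}u_\lambda$. Testing $(\ref{gelfandf})$ against $\varphi_1$ and invoking superlinearity (pick $t_0$ with $f(u)\ge (2\mu_1(\Omega)/\lambda^*)\,u$ for $u\ge t_0$) yields uniform bounds on $\int u_\lambda \varphi_1\,dx$ and $\int f(u_\lambda)\varphi_1\,dx$ independent of $\lambda<\lambda^*$. Since $\varphi_1\asymp\mathrm{dist}(\cdot,\partial\Omega)$ by Hopf's lemma, this gives $u^*\in L^1(\Omega)$ and $f(u^*)\mathrm{dist}(\cdot,\partial\Omega)\in L^1(\Omega)$. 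Monotone convergence in the weak formulation tested against arbitrary $\xi\in C_0^2(\overline\Omega)$ then completes the identification of $u^*$ as the desired $L^1$-weak solution.
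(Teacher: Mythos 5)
The paper does not prove this theorem: it is quoted as a known foundational result and cited to \cite{B,Br,Dup}, so there is no internal argument to compare your sketch against. Your outline does reproduce the standard proof architecture from those references (define $\lambda^*$ as a supremum, bound $\lambda^*$ by testing against $\varphi_1$ together with superlinearity, build minimal solutions by monotone iteration below the supersolution $u_{\lambda'}$, derive stability by contradiction from minimality, and pass to the limit $\lambda\uparrow\lambda^*$), and those pieces are all on the right track.

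Two technical points, one of which you flag yourself. Your stability argument rests on a pointwise Taylor expansion of $f$ around $u_\lambda$ with remainder $o(\varepsilon\phi)$; this needs $f'$ to be \emph{continuous}, whereas the hypotheses only give locally Lipschitz $f$, so $f'$ exists a.e.\ and is locally bounded but can jump. Hopf's lemma, which you invoke, fixes the boundary-decay comparison between $\phi$ and $\phi^2$, but it does not cure the regularity-of-$f'$ issue. The standard way to avoid it is to bypass the expansion entirely: for $\lambda'>\lambda$ set $w:=u_{\lambda'}-u_\lambda>0$, note $-\Delta w\ge\lambda c(x)\,w$ with $c(x)=\frac{f(u_{\lambda'})-f(u_\lambda)}{u_{\lambda'}-u_\lambda}\ge 0$, conclude from this positive supersolution that $\int|\nabla\xi|^2\,dx\ge\lambda\int c\,\xi^2\,dx$ for all $\xi\in C_c^\infty(\Omega)$, and then send $\lambda'\downarrow\lambda$. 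The second point is smaller: testing against $\varphi_1$ yields uniform bounds on $\int u_\lambda\varphi_1\,dx$ and $\int f(u_\lambda)\varphi_1\,dx$, and since $\varphi_1\asymp\mathrm{dist}(\cdot,\partial\Omega)$ this gives $f(u^*)\mathrm{dist}(\cdot,\partial\Omega)\in L^1(\Omega)$ directly, but only $\int u^*\,\mathrm{dist}(\cdot,\partial\Omega)\,dx<\infty$, not $u^*\in L^1(\Omega)$. For the latter, test additionally against the torsion function $\zeta_0$ solving $-\Delta\zeta_0=1$, $\zeta_0=0$ on $\partial\Omega$; then $\int u_\lambda\,dx=\lambda\int f(u_\lambda)\zeta_0\,dx\le C\int f(u_\lambda)\,\mathrm{dist}(\cdot,\partial\Omega)\,dx$ is uniformly bounded, and monotone convergence gives $u^*\in L^1(\Omega)$.
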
  
We are interested in the regularity of the extremal solution. To deal with the regularity problem, the dimension $N$ is a very important factor. Indeed, when $N\ge10$, Joseph and Lundgren \cite{JL} constructed the singular extremal solution $u^*(x)=\log(1/|x|^2)$ for $f(u)=e^u$, $\lambda^*=2(N-2)$, and $\Omega=B_1$. On the other hand, when $N\le9$, Crandall and Rabinowitz \cite{CR} proved that the extremal solution
is bounded if the nonlinearity $f\in C^2(\mathbb{R})$ is a positive, increasing, and convex function which satisfies
\begin{equation}
\lim_{u\to \infty} \frac{f(u)f''(u)}{f'(u)^2}<\infty. \notag
\end{equation}
Typical examples are $f(u)=(1+u)^p$ or $f(u)=e^u$.
Based on these results, Brezis asked in \cite{B} whether extremal solutions are bounded for all $N\le9$, $\Omega$, and convex nonlinearities 
$f$.
   
In the last few decades, many studies have been done to prove the open problem (see \cite{Ned,cabrecapella,cabre2010,cabre2019,Vil,CR-O,CSS}). Finally, recently, Cabr\'e, Figalli, Ros-Oton, and Serra \cite{CFRS} solved the open problem positively. Moreover, they proved the boundedness of extremal solutions for general nonlinearities and convex domains. 

The case in which the nonlinearities and domains are not convex is more challenging problem. For this case, Cabr\'e \cite{cabre2010} showed the boundedness of extremal solutions when $N=2$. By extending his method, Castorina and Sanch\'on \cite{CS} showed it when $N\le4$ and $f(u)$ is convex after a large $u$. Sanch\'on \cite{san} also proved it when $N\le6$ and $\liminf_{u\to\infty}f(u)f''(u)/f'(u)^2>0$. Moreover, Aghajani \cite{A} proved it if the nonlinearities satisfy the following:
\begin{align}
\label{Aghajani}
\frac{1}{2}<\beta_{-}:=\liminf_{u\to\infty}\frac{f'(u)F(u)}{f(u)^2}\le \beta_{+}:=\limsup_{u\to\infty}\frac{f'(u)F(u)}{f(u)^2}<\frac{7}{10}\notag
\end{align}
or
\begin{equation}
\frac{1}{2}<\beta_{-}=\beta_{+}<\infty.\notag
\end{equation}

Our goal is to relax the convexity of nonlinearities. In order to achieve this goal, we focus on the framework of power convexity. For $m\in \mathbb{N}$, we say that a nonnegative function $f$ is $m$-convex if $f^m$ is convex. We note that if $f$ is $m$-convex, then $f$ is $l$-convex for all $m\le l$. Power convexity is a useful extension of convexity and it has become one of the well-known objects of study in elliptic and parabolic equations. For a systematic study of power convexity, see for instance \cite{i,Ishige}.

In this paper, we succeed in generalizing the result in \cite{CFRS} to all $m$-convex nonlinearities. More precisely, we prove the following: 
\begin{theorem}
\label{th1}
Let $N\le 9$, $m\in \mathbb{N}$, and $\Omega\subset \mathbb{R}^N$ be any bounded domain of class $C^3$. Assume that $f$ is a positive, nondecreasing, $m$-convex, and superlinear function. Then the extremal solution of $(\ref{gelfandf})$ is bounded. 
\end{theorem}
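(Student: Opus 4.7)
The plan is to adapt the blow-up/Liouville scheme of Cabr\'e, Figalli, Ros-Oton, and Serra (CFRS), substituting their convex-nonlinearity closedness and Liouville inputs with the $m$-convex counterparts announced in the abstract. The essential new work resides in those two inputs; once granted, the blow-up argument itself is largely mechanical.

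I argue by contradiction: assume $u^*\notin L^\infty(\Omega)$. Using $u_\lambda\uparrow u^*$ and the $C^2$ regularity of each $u_\lambda$, one extracts sequences $\lambda_k\uparrow\lambda^*$ and $x_k\in\overline\Omega$ with $M_k:=\|u_{\lambda_k}\|_{L^\infty(\Omega)}=u_{\lambda_k}(x_k)\to\infty$; via a doubling-lemma argument of Pol\'a\v{c}ik--Quittner--Souplet type applied to $f(u_{\lambda_k})^{1/2}$, one may further arrange that $x_k$ sits at a scale-invariant maximum of that quantity, which is needed to guarantee a nontrivial blow-up limit. Around each such $x_k$ I perform the blow-up
\[
v_k(y):=\alpha_k\bigl(u_{\lambda_k}(x_k+r_k y)-M_k\bigr)\quad\text{on } \Omega_k:=r_k^{-1}(\Omega-x_k),
\]
with scales $r_k,\alpha_k>0$ chosen so that $\alpha_k r_k^2\lambda_k f(M_k)=1$ and $\alpha_k M_k\to\infty$. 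Then $v_k$ is a stable classical solution of $-\Delta v_k=\tilde f_k(v_k)$ in $\Omega_k$ with zero Dirichlet data, where $\tilde f_k(v):=\alpha_k r_k^2\lambda_k f(M_k+\alpha_k^{-1}v)$. Since $(\tilde f_k)^m$ is a positive multiple of $f^m$ pre-composed with an affine map, $\tilde f_k$ inherits positivity, monotonicity, and $m$-convexity. According to whether $r_k^{-1}\mathrm{dist}(x_k,\partial\Omega)$ is unbounded or bounded, the domains $\Omega_k$ exhaust $\mathbb{R}^N$ or converge to a half-space $\mathbb{R}^N_+$.

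The next step is compactness. The universal interior and boundary a priori estimates of CFRS (proved for stable solutions in $3\le N\le 9$ and requiring no convexity hypothesis on the nonlinearity) deliver uniform $C^{2,\alpha}_{\mathrm{loc}}$ bounds on the $v_k$. Combined with the $m$-convex closedness statement, a subsequence converges in $C^2_{\mathrm{loc}}$ to a stable classical solution $v_\infty$ of $-\Delta v_\infty=f_\infty(v_\infty)$ on $\mathbb{R}^N$ or $\mathbb{R}^N_+$, with $f_\infty$ positive, nondecreasing, and $m$-convex; the doubling-lemma choice of $x_k$ prevents $v_\infty$ from being trivial. In the regime $N\le 9$, the Liouville-type theorem for stable $m$-convex solutions contradicts this nontriviality, and thus $u^*\in L^\infty(\Omega)$.

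The principal difficulty is the Liouville step. In the convex case, the CFRS argument tests the stability inequality against a radial cut-off combined with a Pohozaev-type identity and uses the pointwise bound $ff''\ge 0$ to discard a nonnegative term. Under $m$-convexity one has only $ff''\ge -(m-1)(f')^2$, so the discarded term is replaced by a negative remainder that must be absorbed back into the stability inequality itself. Closing this absorption loop uniformly in $m$, throughout the full subcritical range $N\le 9$, is the technical heart of the argument. In parallel, the closedness statement must be formulated so that genuine $m$-convexity — rather than merely a variable-constant surrogate of it — survives in the limit nonlinearity $f_\infty$, which is precisely what keeps the Liouville theorem applicable to $v_\infty$.
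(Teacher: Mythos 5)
Your high-level plan — replace CFRS's convexity by $m$-convexity in a closedness statement and a Liouville theorem, then close via blow-up — is the right strategy and matches what the paper does; you also correctly flag that the closedness result must keep genuine $m$-convexity in the limit. But the specific execution has two genuine gaps. First, your compactness step invokes ``universal interior \emph{and boundary} a priori estimates of CFRS \dots requiring no convexity hypothesis.'' No such boundary estimate exists in CFRS: their boundary H\"older estimate (Theorem 6.1) is precisely the place where convexity of $f$ (or of $\Omega$) enters, and it is exactly the thing this paper must re-prove for $m$-convex $f$. Using it to control blow-up profiles near $\partial\Omega$ is circular. Second, your description of the Liouville step is not what CFRS or this paper does. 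There is no Pohozaev-plus-stability absorption of $ff''$ to repair: in both CFRS and this paper, the half-space Liouville result (Proposition \ref{LT}) is obtained by a contradiction/compactness argument built directly on the closedness result (Proposition \ref{closedness}), after which $0$-homogeneity of the limit forces the nonlinearity to vanish and the maximum principle kills the profile. No pointwise inequality of the form $ff''\ge 0$ is used, so there is no ``negative remainder to absorb uniformly in $m$.''

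Beyond those two points, the Pol\'a\v{c}ik--Quittner--Souplet doubling blow-up $v_k(y)=\alpha_k(u_{\lambda_k}(x_k+r_ky)-M_k)$ is not well adapted here: for a general $m$-convex $f$ the rescaled nonlinearity $\tilde f_k(v)=f(M_k+\alpha_k^{-1}v)/f(M_k)$ need not have a finite, nondegenerate local-uniform limit (the ratio $f'(M_k)/f(M_k)$ may blow up or vanish), so one cannot expect $C^2_{\rm loc}$ compactness of the $v_k$. The paper circumvents this by working in the weak $W^{1,2}$ topology and, crucially, by allowing limit nonlinearities that take the value $+\infty$ above a threshold — this is what the class $C^m$ of extended-real-valued functions is for — and it proves a boundary $C^\alpha$ estimate (Theorem \ref{blowup}) by adapting CFRS's contradiction/blow-up built on the $W^{1,2+\gamma}$ bound, not an $L^\infty$ doubling blow-up. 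The scheme is then: closedness (Prop.~\ref{closedness}) $\Rightarrow$ Liouville (Prop.~\ref{LT}) $\Rightarrow$ boundary $C^\alpha$ estimate (Thm.~\ref{blowup}) $\Rightarrow$ combine with CFRS's interior estimate and a covering argument $\Rightarrow$ monotone convergence to reach $u^*$.
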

The following collorary follows immediately from Theorem \ref{th1}. 
\begin{corollary}
\label{cor}
Let $N\le 9$ and $\Omega\subset \mathbb{R}^N$ be any bounded domain of class $C^3$. Assume that $f \in C^2(\mathbb{R})$ is a positive, increasing, and superlinear function which satisfies 
\begin{equation}
\label{ratio}
\liminf_{u\to \infty} \frac{f(u)f''(u)}{f'(u)^2}>-\infty. 
\end{equation}
Then the extremal solution of $(\ref{gelfandf})$ is bounded.
\end{corollary}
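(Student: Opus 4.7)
The plan is to derive Corollary \ref{cor} as a direct consequence of Theorem \ref{th1}: I will show that the hypothesis $\liminf_{u\to\infty} f(u)f''(u)/f'(u)^2 > -\infty$ forces $f$ to be $m$-convex on $[0,\infty)$ for some $m\in\mathbb{N}$, after which the conclusion is immediate.

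The starting point is the elementary identity
\[
(f^m)''(u) \;=\; m\, f(u)^{m-2}\bigl[(m-1)\,f'(u)^2 + f(u)\, f''(u)\bigr],
\]
valid wherever $f>0$. Since $f>0$, the convexity of $f^m$ on $[0,\infty)$ is equivalent to the pointwise bound
\[
\frac{f(u)\,f''(u)}{f'(u)^2} \;\ge\; -(m-1)\qquad\text{for all } u\ge 0.
\]
The task therefore reduces to bounding the ratio $ff''/(f')^2$ from below on the whole half-line. The hypothesis supplies constants $A,u_0>0$ with $f(u)f''(u)/f'(u)^2 \ge -A$ for $u \ge u_0$. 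On the compact interval $[0,u_0]$, continuity of $f,f',f''$ together with $f'>0$ yields a uniform lower bound $-B$. Taking any integer $m \ge \max\{A,B\}+1$ then gives $(f^m)'' \ge 0$ on $[0,\infty)$, so $f$ is $m$-convex. Since $f$ remains positive, nondecreasing (indeed increasing), $m$-convex, and superlinear, Theorem \ref{th1} applies and the extremal solution is bounded.

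The only mild technicality is the need for $f'>0$ throughout $[0,\infty)$ in order that the ratio $ff''/(f')^2$ be well defined and continuous on the compact piece $[0,u_0]$: at an interior zero of $f'$ with $f>0$, the expression $ff''/(f')^2$ would generically blow up to $-\infty$, so the hypothesis of Corollary \ref{cor} essentially forces $f'>0$ on $[0,\infty)$ and this should be flagged as implicit. No further obstacle appears; once the uniform lower bound on $ff''/(f')^2$ is in place, Theorem \ref{th1} does all of the remaining work.
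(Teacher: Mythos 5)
Your approach is the paper's: the identity $(f^m)''=mf^{m-2}[(m-1)(f')^2+ff'']$ reduces $m$-convexity to a uniform lower bound on $ff''/(f')^2$, which the $\liminf$ hypothesis supplies near infinity and continuity supplies on a compact initial segment, so some integer $m$ works and Theorem~\ref{th1} finishes. The paper compresses this to two lines without the compact-interval step; you have supplied that step correctly.

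One clarification on your closing caveat about $f'>0$: the $\liminf$ in (\ref{ratio}) constrains only the behavior as $u\to\infty$ and by itself cannot rule out a finite point $u_1$ where $f'$ vanishes to high order and $ff''/(f')^2$ is unbounded below nearby; what actually makes the compact-interval bound available is reading ``increasing'' in the corollary's hypotheses as $f'>0$ (on $[0,\infty)$ at least), which is the intended meaning. Also note that you establish $(f^m)''\ge 0$ only on $[0,\infty)$, whereas the paper's definition of $m$-convexity asks for convexity of $f^m$ on the whole domain where $f$ is finite; this is harmless since the Gelfand problem only evaluates $f$ at nonnegative arguments (equivalently, one can extend $f$ constantly to the negative half-line without losing $m$-convexity), but it is a point the paper's terse proof also leaves implicit.
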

We provide a unified viewpoint on the results of previous studies by considering $m$-convexity. As mentioned before, the ratio $f(u)f''(u)/f'(u)^2$ was introduced by Crandall and Rabinowitz \cite{CR} to state a condition that is much stronger than convexity. Then, after years of research, it was eliminated by Cabr\'e, Figalli, Ros-Oton, and Serra \cite{CFRS}. Corollary \ref{cor} revives an important feature of the ratio in the
boundedness of extremal solutions; and our results thus clarify the relation between the results in \cite{CR}, \cite{CS}, \cite{san}, and \cite{CFRS} through consideration of $m$-convexity.

The idea of the proof is as follows. Since a extremal solution $u^*\in L^1(\Omega)$ is approximated by smooth stable solutions, it is sufficient to provide an a priori $L^\infty$ estimate of classical stable solutions. In order to provide the estimate, we apply the method used in \cite{CFRS}. In \cite{CFRS}, the authors first provided an interior $L^\infty$ estimate and a global $W^{1,2+\gamma}$ estimate of the classical stable solutions with all nonlinearities. Thanks to the global $W^{1,2+\gamma}$ estimate, they constructed a closedness result for stable solutions with convex nonlinearities. As a consequence, they provided a Liouville-type result and by using a blow-up argument, they proved a boundary $L^\infty$ estimate. Our attempt is to extend the closedness result to a larger class of solutions. We define the class of $m$-convex functions and provide a new closedness result for stable solutions with $m$-convex nonlinearities. As a result, we prove the main theorem as an analogue of \cite{CFRS}.
\section{A closedness result for stable solutions with \texorpdfstring{$m$}{Lg}-convex nonlinearities}
The aim of this section is to provide a closedness result for solutions with $m$-convex nonlinearities. As mentioned in the introduction, this result is necessary to prove a Liouville-type result and apply a blow-up argument. 

Let $m$ be a natural number. We say that a nonnegative function $f:\mathbb{R}\to [0,\infty]$ is $m$-convex if $f^m$ is convex in 
$(-\infty, \sup_{f(t)<\infty} t)$ and define 
\begin{align}
C^m:=\left\{f:\mathbb{R}\to [0,\infty]:
\begin{array}{ll}
f\text{ is lower semi-continuous, nonnegative, }\\ 
\text{nondecreasing, and $m$-convex.} 
\end{array}
\right\}. \notag
\end{align}
For $f\in C^m$, we define $f'_{-}(t)$ as
\begin{align}
f'_{-}(t):=
\begin{cases}
\lim_{k\downarrow 0} \frac{f(t)-f(t-k)}{k} &\text{if}\hspace{2mm} f(t)<\infty, \\
\infty & \text{if}\hspace{2mm} f(t)=\infty.\\
\end{cases}
\notag
\end{align}
We note the following two properties. The first notation is that the inclusion $C^{m_1}\subset C^{m_2}$ holds for $m_1\le m_2$. Indeed, let $f\in C^{m_1}$ and define $F(g):=g^\frac{m_2}{m_1}$. Since $F$ and $f^{m_1}$ is convex, we have 
\begin{equation}
F\left(f^{m_1}(\frac{s_1+s_2}{2})\right)\le F\left(\frac{f^{m_1}(s_1)+f^{m_1}(s_2)}{2}\right)\le \frac{F(f^{m_1}(s_1))+F(f^{m_1}(s_2))}{2}\notag
\end{equation}
for all $s_1,s_2\in (-\infty, \sup_{f(t)<\infty} t)$. Therefore, we have $f\in C^{m_2}$. 
The second notation is that $C^m$ is closed under the scaling $f\mapsto af(b\cdot)$ for all $a>0$, $b>0$. This notation is important since we rescale the nonlinearity $f$ when we use the blow-up analysis.

Next, we give the definition of stable solutions. Let $\Omega$ be a domain and consider the equation
\begin{equation}
\label{eqf}
-\Delta u =f(u) \hspace{3mm} \text{in $\Omega$}.
\end{equation}
\begin{definition}
\label{stable}
Let $m\in \mathbb{N}$ and $f\in C^m$. Then we say that $u\in W^{1,2}_{\rm{loc}}(\Omega)$ is a weak solution of (\ref{eqf})
if $f(u)\in L^1_{\rm{loc}}(\Omega)$ and
\begin{equation}
\int_{\Omega}\nabla u \cdot \nabla \varphi \,dx = \int_{\Omega}f(u) \varphi \,dx \hspace{5mm} \text{for all $\varphi \in C^{\infty}_0 (\Omega)$}.\notag  
\end{equation}
Then, we say that $u$ is a stable solution if $f'_{-}(u)\in L^1_{\rm{loc}}(\Omega)$ and
\begin{equation}
\int_{\Omega}f'_{-}(u){\xi}^2\,dx \leq \int_{\Omega}{|\nabla \xi |}^2\,dx \hspace{5mm} \text{for all $\xi \in C^{\infty}_0 (\Omega)$}.\notag
\end{equation}
Moreover, we define
\begin{align}
S^m(\Omega):= \left\{u\in W^{1,2}_{\rm{loc}}(\Omega): u\text{ is a stable solution of (\ref{eqf}) for some }f\in C^m\right\}. \notag
\end{align}
\end{definition}
We note that if $u$ is a weak solution of (\ref{eqf}) for some $f\in C^m$, then $f(t)$ is finite for $t<\sup_{\Omega} u$. It follows from $f(u)\in L^1_{\rm{loc}}(\Omega)$. In particular, if $\{u(x)=\sup_{\Omega} u\}$ has a positive measure, we have $f(\sup_{\Omega}u)<\infty$.

The following proposition is the closedness result.

\begin{proposition}
\label{closedness}
Let $m\in \mathbb{N}$, $\gamma>0$, and let $\{u_k\}_{k\in \mathbb{N}}\subset S^m(\Omega)$ be a $W^{1,2+\gamma}$ locally bounded sequence. Then, there exist a subsequence $\{u_{k_j}\}_{j\in \mathbb{N}}\subset \{u_k\}_{k\in \mathbb{N}}$ and $u\in S^m(\Omega)$ such that $u_{k_j}\to u$ in $W^{1,2}_{\rm{loc}}(\Omega)$ as $j\to \infty$. 
\end{proposition}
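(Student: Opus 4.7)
The plan is to proceed in three main stages: extract compactness limits for both the solutions $u_k$ and the nonlinearities $f_k$, pass to the limit in the equation while upgrading convergence to strong $W^{1,2}_{\mathrm{loc}}$, and finally pass to the limit in the stability inequality. First I would extract $u$: reflexivity combined with the $W^{1,2+\gamma}_{\mathrm{loc}}$ bound yields, after passing to a subsequence, $u_k\rightharpoonup u$ weakly in $W^{1,2+\gamma}_{\mathrm{loc}}(\Omega)$, and Rellich--Kondrachov gives $u_k\to u$ strongly in $L^p_{\mathrm{loc}}$ for the admissible range of $p$ and a.e. Testing $-\Delta u_k=f_k(u_k)\ge 0$ against a nonnegative cutoff $\phi\in C_c^\infty(\Omega)$ yields
\[
\int \phi\, f_k(u_k)\,dx=-\int u_k\,\Delta\phi\,dx \le C(\phi),
\]
so $\{f_k(u_k)\}$ is uniformly bounded in $L^1_{\mathrm{loc}}(\Omega)$.

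To extract a limiting nonlinearity, I would work with the convex functions $g_k:=f_k^m$. The key uniform pointwise bound is the following: for any $T<\operatorname*{ess\,sup}_\Omega u$, strong $L^1_{\mathrm{loc}}$ convergence gives $|\{u_k>T\}\cap K|\ge\delta>0$ on some compact $K\subset\Omega$ for $k$ large, and monotonicity of $f_k$ combined with the $L^1$ bound above yields $f_k(T)\,\delta\le\int_K f_k(u_k)\,dx\le C$. Hence $g_k=f_k^m$ is locally uniformly bounded, and by convexity locally equi-Lipschitz, on every compact subinterval of $(-\infty,\operatorname*{ess\,sup}_\Omega u)$; Helly's selection theorem then extracts a further subsequence converging locally uniformly to a convex, nondecreasing, lsc limit $g$, and $f:=g^{1/m}$ (extended by $\infty$) belongs to $C^m$. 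Almost-everywhere convergence $u_k\to u$ together with locally uniform convergence $f_k\to f$ gives $f_k(u_k)\to f(u)$ a.e., and then in $L^1_{\mathrm{loc}}$ by dominated convergence, so passing to the limit in the weak formulation yields $-\Delta u=f(u)$. Testing the equations for $u_k$ and for $u$ against $u_k\phi^2$ and $u\phi^2$ respectively and comparing then shows $\int\phi^2|\nabla u_k|^2\,dx\to\int\phi^2|\nabla u|^2\,dx$, which combined with weak convergence upgrades to strong $W^{1,2}_{\mathrm{loc}}$ convergence.

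The main obstacle is passing to the limit in the stability inequality, since $(f_k)'_-$ can have jumps and since $f$ itself (as opposed to $g=f^m$) need not be convex. My plan is to exploit the pointwise identity $(f_k)'_-(t)=\frac{1}{m}\,g_k(t)^{(1-m)/m}\,(g_k)'_-(t)$ at points where $g_k(t)>0$, together with the convex-analytic fact that locally uniform convergence $g_k\to g$ implies $\liminf_k(g_k)'_-(t_k)\ge g'_-(t)$ whenever $t_k\to t$, with equality at continuity points of $g'_-$. Applied with $t_k=u_k(x)$ and $t=u(x)$, and combined with $g_k(u_k)\to g(u)$ a.e., this gives $\liminf_k(f_k)'_-(u_k)\ge f'_-(u)$ a.e.\ (handling the set $\{f(u)=0\}$ separately via monotonicity, where both sides vanish). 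Fatou's lemma then closes the estimate,
\[
\int f'_-(u)\xi^2\,dx\le\liminf_{k\to\infty}\int (f_k)'_-(u_k)\xi^2\,dx\le\int|\nabla\xi|^2\,dx
\]
for every $\xi\in C_c^\infty(\Omega)$, which in particular yields $f'_-(u)\in L^1_{\mathrm{loc}}(\Omega)$ and hence $u\in S^m(\Omega)$.
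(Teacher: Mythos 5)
Your extraction of $u$ and of the limiting nonlinearity $f=g^{1/m}$ essentially reproduces the paper's Step~1 and is fine, but the heart of the argument is missing: in passing to the limit in the equation you write that $f_k(u_k)\to f(u)$ a.e.\ ``and then in $L^1_{\mathrm{loc}}$ by dominated convergence,'' yet you never produce a dominating function, and none exists in general. A uniform $L^1_{\mathrm{loc}}$ bound on $f_k(u_k)$ plus a.e.\ convergence does not give $L^1$ convergence; one needs uniform integrability, and moreover your a.e.\ convergence is only established on $\{u<L\}$ with $L:=\sup_\Omega u$, since the locally uniform convergence $f_k\to f$ lives on $(-\infty,L)$. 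This is exactly where the paper does real work. When $L=\infty$ it proves uniform integrability via the tail bound
\[
\int_{\operatorname{supp}\xi\cap\{u_k>j\}}f_k(u_k)\,dx\le \frac{1}{j}\int f_k(u_k)u_k\eta\,dx\le \frac{C}{j},
\]
and when $L<\infty$ it decomposes $f_k(u_k)=f_k(u_k-3\delta)+\bigl(f_k(u_k)-f_k(u_k-3\delta)\bigr)$, applies DCT to the first (uniformly bounded) piece, and bounds the second by $3m\delta\,(f_k)'_-(u_k)$ via the $m$-convexity identity $\frac{a-b}{a^m-b^m}a^{m-1}\le 1$ and then by $C\delta$ via the \emph{stability} inequality. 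None of this is optional: without it neither $-\Delta u=f(u)$ nor the strong $W^{1,2}_{\mathrm{loc}}$ convergence (your energy-comparison argument needs $\int f_k(u_k)u_k\phi^2\to\int f(u)u\phi^2$) is justified.

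Your Step~3, taken in isolation, is a genuinely different route from the paper's and an interesting one. The paper avoids left derivatives entirely: it inserts the $(m+1)$-convexity bound $f_k^{m+1}(u_k-2\varepsilon)-f_k^{m+1}(u_k-2\varepsilon-\delta)\le(m+1)\delta f_k^m(u_k)(f_k)'_-(u_k)$ into the stability inequality, replaces the denominator $f_k^m(u_k)$ by $f_k^m(u_k-2\delta)$ at a controlled $O(\delta)$ cost, and only then applies Fatou --- the point of the shift being that every argument of $f_k$ then sits strictly below $L$, where local uniform convergence is available. You instead propose a pointwise $\liminf_k(f_k)'_-(u_k)\ge f'_-(u)$ via the chain rule $(f_k)'_-=\frac1m g_k^{(1-m)/m}(g_k)'_-$ and the lower semicontinuity of left derivatives of converging convex functions, followed by one Fatou. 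That lemma is correct for $t$ interior to the interval of uniform convergence, but exactly on $\{u=L\}$ (which may have positive measure when $L<\infty$) you need $g_k(u_k)\to g(L)$ rather than merely $\liminf g_k(u_k)\ge g(L)$ --- since $(1-m)/m<0$, an unbounded $g_k(u_k)$ would kill the lower bound --- and that convergence is precisely the $L^1_{\mathrm{loc}}$ conclusion of Step~2 that is still missing. So the Step~3 shortcut is attractive but cannot close independently of the gap above, and you should either fill Step~2 along the paper's lines or adopt the paper's shifted difference-quotient argument in Step~3 as well.
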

\begin{remark}
\label{remark}
\rm{We note that if $u$ is a weak solution of (\ref{eqf}) for some $f\in C^m$, then $f(t)$ is finite for $t<\sup_{\Omega} u$. It follows from $f(u)\in L^1_{\rm{loc}}(\Omega)$. In particular, if $\{u(x)=\sup_{\Omega} u\}$ has a positive measure, we have $f(\sup_{\Omega}u)<\infty$.}
\end{remark}
\begin{proof}
By assumption, we have a $W^{1,2+\gamma}$ locally bounded sequence $u_k\in S^m(\Omega)$ of stable solutions of $-\Delta u_k=f_k(u_k)$ with $f_k\in C^m$.
Thanks to the $W^{1,2+\gamma}$ locally boundedness of $u_k$, by using the Rellich-Kondrachov theorem and a diagonal argument, we verify that there exists a function $u\in L^{2}_{\rm{loc}}(\Omega)$ such that up to subsequence if necessary, $u_k\to u$ in $L^2_{\rm{loc}}(\Omega)$ in the sense that for any domain $\Omega'\subset\subset \Omega$, $u_k\to u$ in $L^2(\Omega')$. Furthermore, thanks to an interpolation inequality and the $W^{1,2+\gamma}$ locally boundedness of $u_k$, we get $u\in W^{1,2}_{\rm{loc}}(\Omega)$, $u_k(x)\to u(x)$ in $W^{1,2}_{\rm{loc}}(\Omega)$, and $u_k\to u$ for a.e. $x\in \Omega$ (if necessary, by taking a subsequence).

The following proof generalizes the method of \cite[Theorem 4.1]{CFRS} by focusing on the convexity of $f_{k}^m$ instead of the convexity of $f_{k}$. There are three main differences as follows. The first is that in Step 1, we prove the boundedness of $f_k(l)$ for all $l<\sup_{\Omega}u$ without using the condition of $f$. The second is that in Step 2, we use the $m$-convexity of $f_k$ and apply an algebraic argument to show that $\lVert f_k (u_k)-f_k (u_k-\delta)\rVert_{L^1(\Omega')}\to 0$ uniformly on $k$ as $\delta\to 0$ for all subdomains $\Omega'\subset\subset\Omega$. The last point is that in Step 3, we prove the inequality (\ref{stab}) by using the $(m+1)$-convexity of $f_k$ and show that $f^m_k(u_k)$ on the left side of (\ref{stab}) can be replaced by $f^m_k(u_k-2\delta)$. 

\vspace{10pt}
\noindent
\textbf{Step 1.} A compactness estimate on $f_k$.
  
Set $L:=\sup_{\Omega}u \in (-\infty,\infty]$ and let $l<L$. We claim that
\begin{align}
\label{limsup}
\limsup_{k\to \infty} f_k(l)<\infty.
\end{align}
Indeed, let $l<L$ and $\Omega'\subset\subset \Omega$ be a domain such that $\{u(x)>l\}\cap \Omega'$ has a positive measure.
We choose a nonnegative cut-off function $\eta\in C^\infty_{0}(\Omega)$ such that $\eta=1$ in $\Omega'$.
Since $u_k(x)\to u(x)$ for a.e. $x\in \Omega$, by applying Fatou's Lemma to the sequence $1_{\{u_k>l\}\cap \Omega'}$, we get
\begin{equation}
\label{menseki}
\liminf_{k\to\infty}|\{u_k>l\}\cap \Omega'|\ge|\{u(x)>l\}\cap \Omega'|>0.
\end{equation}
On the other hand, Since $f$ is nonnegative and nondecreasing, we have
\begin{align}
\label{bound of fk}
\begin{split}
f_k(l)&\le\frac{1}{|\{u_k>l\}\cap \Omega'|}\int_{\{u_k>l\}\cap \Omega'}f_k(u_k) \, dx\\
&\le \frac{1}{|\{u_k>l\}\cap \Omega'|}\int_{\Omega}f_k(u_k)\eta\, dx \\
&\le\frac{1}{|\{u_k>l\}\cap \Omega'|}\int_{\Omega}\nabla u_k \cdot \nabla \eta \, dx \le\frac{C}{|\{u_k>l\}\cap \Omega'|} \lVert\nabla u_k\rVert_{L^2(\Omega)}
\end{split}
\end{align}
for some constant $C$ independent of $k$ and all $k$ suffficiently large. Combining (\ref{menseki}) and (\ref{bound of fk}), we obtain (\ref{limsup}).

Furthermore, by the convexity of $f^{m}_{k}$, we have for all $k$ suffficiently large,
\begin{equation}
(f^{m}_{k})'_{-}(l)<\frac{f^{m}_{k} (l+\delta)-f^{m}_{k} (l)}{\delta}<\limsup_{k\to \infty}\frac{f^{m}_{k}(l+\delta)}{\delta}<\infty \hspace{5mm} \text{for all $l<L$}.\notag
\end{equation}
Since $m$-convex functions are $(m+1)$-convex, in a similar way, it follows that $\limsup_{k\to\infty}(f^{m+1}_{k})'_{-}(l)<\infty$ for all $l<L$.
Therefore, $f^{m}_k$ is uniformly bounded and equicontinuous. By Ascoli Arzela's theorem and a diagonal argument, there exist the function $g: (-\infty,L)\to \mathbb{R}$ such that $f^{m}_k\to g$ locally uniformly on $(-\infty,L)$.
Define $g(L):=\lim_{l\uparrow L}g(l)$, $g(l):=\infty$ for $l>L$, and $f:=g^{\frac{1}{m}}$. Then it is easy to check $f\in C^m$ and $f_k\to f$ locally uniformly on $(-\infty,L)$.

\vspace{10pt}
\noindent
\textbf{Step 2.} $-\Delta u = f(u)$ in $\Omega$.

For every nonnegative test function $\xi\in C^{0,1}_{0}(\Omega)$, we have 
\begin{align}
\int_{\Omega} \nabla u \cdot \nabla \xi \,dx = \lim_{k \to \infty} \int_{\Omega} \nabla u_k \cdot \nabla \xi \,dx =\lim_{k \to \infty}\int_{\Omega} f_k(u_k) \xi \,dx. \notag
\end{align}
We claim that $f_k(u_k) \to f(u)$ for a.e. $x \in\{u<L\}$ as $k\to \infty$.
Indeed, let $x \in \{u<L\}$ and let $l$ be a positive constant satisfying $u(x)<l<L$. Then we have
\begin{align}
|f^{m}_k(u_k(x))-g(u(x))| \le& |g(u(x))-f^{m}_k(u(x))|+|f^{m}_k(u(x))-f^{m}_k(u_k(x))| \notag \\
\le & |g(u(x))-f^{m}_k(u(x))|+(f^{m}_k)'_{-}(l)|u(x)-u_k(x)| \notag \\
\to &0 \hspace{2mm}\text{as $k \to \infty$}\notag. 
\end{align}
Thus, this claim holds.

In the following, $\eta \in C^\infty_0(\Omega)$ denotes a nonnegative cut-off function such that $\eta=1$ on the support of $\xi$.

\vspace{10pt}
\noindent
\textbf{Case 1.} $L=\infty$. 

We have
\begin{align}
\int_{\rm{supp} (\xi)} f_k (u_k) u_k\, dx \le \int_{\Omega} f_k (u_k) u_k \eta\, dx 
= \int_{\Omega} \nabla u_k \cdot \nabla (u_k \eta)\,dx \le C \notag
\end{align}
for some constant $C$ independent of $k$, where the last bound follows from the $W^{1,2}$ locally boundedness of $u_k$. We take a continuous function $\varphi : \mathbb{R} \to [0,1]$ such that $\varphi=0$ on $(-\infty,0]$ and $\varphi=1$ on $[1,\infty)$. Then we deduce that
\begin{align}
\label{ineqj}
\begin{split}
\int_{\rm{supp}(\xi)} \varphi(u_k-j) f_k (u_k) \,dx \le&\int_{\rm{supp}(\xi)\cap\{u_k>j\}} f_k (u_k) \,dx \\
\le& \frac{1}{j}\int_{\rm{supp} (\xi) \cap\{u_k>j \}} f_k (u_k)u_k \,dx \le \frac{C}{j}.
\end{split}
\end{align}
Therefore, by Fatou's Lemma, we also have
\begin{align}
\label{fatou}
\int_{\rm{supp}(\xi)} f(u)\varphi(u-j)\, dx \le \frac{C}{j}\hspace{5mm}\text{for all $j\ge 1$}. 
\end{align}
Especially, we get $f(u) \in L^1_{\rm{loc}}(\Omega)$.
Furthermore, since
\begin{align}
&f_k (u_k)[1-\varphi(u_k-j)]\le f_k (j+1) \le C_j,  \notag\\
&f_k (u_k)[1-\varphi(u_k-j)] \to f(u)[1-\varphi(u-j)] \hspace{5mm}\text{for a.e. $x\in \Omega$ as $k\to \infty$}, \notag
\end{align}
by combing (\ref{ineqj}) and (\ref{fatou}) and applying the dominated convergence theorem, we have
\begin{align}
\int_{\rm{supp}(\xi)}&|f_k (u_k)- f(u)|\,dx \notag \\
\le& \int_{\rm{supp}(\xi)}|f_k (u_k)(1-\varphi(u_k-j))- f(u)(1-\varphi(u-j))|\, dx \notag \\
 &+ \int_{\rm{supp}(\xi)}|f_k (u_k)\varphi(u_k-j)+f(u)\varphi(u-j)|\,dx\notag \\
\le&  \frac{C}{j} + o(1). \notag
\end{align}                 
By letting first $k\to \infty$ and then $j\to \infty$, we get the result. 

\vspace{10pt}
\noindent
\textbf{Case 2.} $L<\infty$.

Let $\delta>0$. Since $(u_k-L-\delta)_{+} \ge \delta $ in $\{u_k>L+2\delta\}$, we have
\begin{align}
\delta&\int_{\rm{supp}(\xi) \cap \{u_k>L+2\delta\}}f_k (u_k)\,dx  \notag \\
&\le\int_{\rm{supp}(\xi) \cap \{u_k>L+2\delta\}}f_k (u_k)(u_k-L-\delta)_{+}\,dx \notag \\
&\le \int_{\Omega}f_k(u_k)(u_k-L-\delta)_{+}\eta\,dx \notag\\
&\le \int_{\Omega}\left(|\nabla u_k|^2\eta+\nabla u_k \cdot \nabla\eta(u_k-L-\delta)_{+}\right)1_{\{u_k>L+\delta\}}\,dx. \notag
\end{align}
Since $u_k$ are uniformly bounded in $W^{1,2+\gamma}(\rm{supp}(\eta))$ and $1_{\{u_k>L+\delta\}} \to 0$ and
$(u_k-L-\delta)_{+} \to 0$ for a.e. $x\in \Omega$ as $k\to \infty$, we deduce from H\"older's inequality that,
\begin{align}
\lim_{k\to \infty}\int_{\rm{supp}(\xi)\cap\{u_k>L+2\delta\}} f_k(u_k)\,dx=0. \notag
\end{align}
Hence, we get
\begin{align}
\label{estimate}
\begin{split}
\int_{\Omega}f_k(u_k)\xi \,dx=&\int_{\Omega \cap \{u_k\le L+2\delta\}}f_k(u_k)\xi \,dx + o(1)  \\
\le&\hspace{2mm} \mathrm{I} + \mathrm{II}+o(1),
\end{split} 
\end{align}
where
\begin{align}
&\mathrm{I} = \int_{\Omega\cap\{u_k\le L+2\delta\}}f_k(u_k-3\delta)\xi \,dx , \notag \\
&\mathrm{II} = \int_{\Omega\cap\{u_k\le L+2\delta\}}\left(f_k(u_k)-f_k(u_k-3\delta)\right)\xi \,dx. \notag
\end{align}
We note that $f_k(u_k-3\delta) \to f(u-3\delta)$ for a.e. $x\in \Omega$ and $f_k (u_k-3\delta)\le f_k (L-\delta)\le C_\delta$
in $\rm{supp}(\xi) \cap \{u_k\le L+2\delta\}$ for some constant $C_{\delta}$ depending on $\delta$ but not on $k$. Thus we deduce from the dominated convergence theorem that
\begin{align}
\label{estimatei} 
\mathrm{I} = \int_{\Omega}f(u-3\delta)\xi \,dx + o(1).
\end{align}
On the other hand, we prove $\mathrm{|II|}<C\delta$ for some $C$ independent of $k$ and $\delta$. Indeed, let $k>0$. We assume $\{x: u_k(x)=\sup_{\Omega}u_k \}$ has a positive measure.
As mentioned in Remark \ref{remark}, we have $f_k(\sup_{\Omega} u_k)<\infty$. Thus we have
\begin{align}
f_k(u_k(x))-f_k(u_k(x) - 3\delta) =& \frac{f_k(u_k(x))-f_k(u_k(x) - 3\delta)}{f^{m}_k(u_k(x))-f^{m}_k(u_k(x) - 3\delta)}\int^{u_k(x)}_{u_k(x)-3\delta} (f^{m}_{k})'_{-} (s) \,ds \notag\\ 
\le& 3m\delta\frac{f_k(u_k(x))-f_k(u_k(x) - 3\delta)}{f^{m}_k (u_k(x))-f^{m}_k(u_k(x) - 3\delta)}f^{m-1}_k (u_k) (f_k)'_{-}(u_k) \notag\\
\le& 3m\delta (f_k)'_{-}(u_k(x)) \notag
\end{align}
for all $x\in \Omega$. In a similar way, if $\{x: u_k(x)=\sup_{\Omega}u_k\}$ is a null set, the above estimate holds for a.e. $x \in \Omega$. 
Thanks to the stability of $u_k$, we get
\begin{align}
\label{estimateii}
|\mathrm{II}| = C\delta \int_{\Omega\cap\{u_k\le L+2\delta\}}(f_{k})'_{-}(u_k)\xi\,dx <C\delta
\end{align}
for some constant $C$ independent of $k$ and $\delta$. Combing (\ref{estimate}), (\ref{estimatei}), and (\ref{estimateii}), we have
\begin{align}
\int_{\Omega}f(u-3\delta)\xi\,dx -C\delta&\le\liminf_{k\to \infty} \int_{\Omega}f_k(u_k)\xi\,dx \notag \\
&\le\limsup_{k\to \infty} \int_{\Omega}(f_k)(u_k)\xi\, dx\le\int_{\Omega}f(u-3\delta)\xi\,dx+C\delta,\notag
\end{align}
for some constant $C$ independent of $\delta$. Since $\delta$ is arbitrary, by applying the monotone convergence theorem,\footnote{It is sufficient to prove only when $\xi>0$. Indeed, every $\xi\in C^\infty_{0}(\Omega)$ is represented as a difference between two positive functions belonging to $C^{0,1}_{0}(\Omega)$.} we have $f(u)\in L^1_{\rm{loc}}(\Omega)$ and
\begin{align}
\lim_{k\to \infty} \int_{\Omega}f_k(u_k)\xi \,dx =\int_{\Omega}f(u) \xi \,dx.\notag
\end{align}
\vspace{10pt}
\noindent
\textbf{Step 3.} The stability of $u$.

Define $T:=\sup_{f(t)=0}t$. Then without loss of generality, we assume $T<L$. Let $0<\varepsilon<\min\{1, (L-T)/8\}$. Since $f_k\to f$ locally uniformly on $(-\infty, L)$, there exists a large number $K>0$ such that
\begin{align}
\label{a}
f_k(T+2\varepsilon)\ge\frac{f(T+2\varepsilon)}{2}>0\hspace{5mm} \text{for all $k>K$}. 
\end{align}
Let $j\in \mathbb{N}$ and $0<\delta<\varepsilon$. We denote $L_j:=\min\{j,L\}$ and $E_k:=\{T+4\varepsilon<u_k<L_{j}+\delta \}$. We note that since $f_k$ is $m$-convex (and thus $(m+1)$-convex), we have
\begin{equation}
f_k^{m+1} (u_k-2\varepsilon)-f_k^{m+1}(u_k-2\varepsilon-\delta)\le (m+1)\delta f_k^{m}(u_k)(f_k)'_{-}(u_k).\notag
\end{equation}
Thanks to this notation and stability inequality, we have for all $\xi\in C^{\infty}_{0}(\Omega)$ and $k>K$,
\begin{align}
\label{stab}
\begin{split}
\int_{E_k} \frac{f^{m+1}_k(u_k-2\varepsilon)-f^{m+1}_k(u_k-2\varepsilon-\delta)}{(m+1)\delta f_k^{m}(u_k)}\xi^2\,dx \le \int_{\Omega} |\nabla \xi|^2 \,dx.
\end{split}
\end{align}
Our attempt is to replace $f^m_k(u_k)$ on the left side of (\ref{stab}) with $f^m_k(u_k-2\delta)$. In order to justify it, we claim that we have
\begin{align}
\label{aa}
\begin{split}
\int_{E_k}\frac{f^{m+1}_k(u_k-2\varepsilon)-f^{m+1}_k(u_k-2\varepsilon-\delta)}{(m+1)\delta}\cdot&\frac{f_k^{m}(u_k)-f_k^{m}(u_k-2\delta)}{f_k^{m}(u_k)f_k^{m}(u_k-2\delta)}\xi^2\,dx\\
&<C\delta
\end{split}
\end{align}
for all $k>K$ and some constant $C$ independent of $\delta$ and $k$. Indeed, since $f_k$ is $(m+1)$-convex and $(f^{m+1}_k)'_{-}(l)$ is uniformly bounded for all $l<L$, thanks to (\ref{a}), we get for all $k>K$,
\begin{equation}
\label{b}
\frac{f^{m+1}_k(u_k-2\varepsilon)-f^{m+1}_k(u_k-2\varepsilon-\delta)}{(m+1)\delta f_k(u_k)f_k^{m}(u_k-2\delta)}<C(f^{m+1}_k)'_{-}(u_k-2\varepsilon)<C, 
\end{equation}
where $C$ is a constant independent of $\delta$ and $k$.
On the other hand, since $f_k$ is $m$-convex, we have for all $k>K$,
\begin{equation}
\label{c}
\int_{E_k}\frac{f_k^{m}(u_k)-f_k^{m}(u_k-2\delta)}{f_k^{m-1}(u_k)}\xi^2\,dx\le C\delta\int_{\Omega}(f_k)'_{-}(u_k)\xi^2\, dx<C\delta
\end{equation}
for some constant $C$ independent of $\delta$ and $k$. Therefore this claim holds by combing (\ref{b}) and (\ref{c}).
Moreover, thanks to $f_k\to f$ locally uniformly on $(-\infty, L)$ and $u_k\to u$ a.e. in $\Omega$, By combing (\ref{aa}) and (\ref{stab}) and applying Fatou's lemma, we have
\begin{align}
\int_{\{T+6\varepsilon<u\le L_j \}}&\frac{f^{m+1}(u-2\varepsilon)-f^{m+1}(u-2\varepsilon-\delta)}{(m+1)\delta f^{m}(u)}\xi^2\,dx\notag\\
&\le\int_{\{T+6\varepsilon< u\le L_j \}}\frac{f^{m+1}(u-2\varepsilon)-f^{m+1}(u-2\varepsilon-\delta)}{(m+1)\delta f^{m}(u-2\delta)}\xi^2\,dx\notag\\
&\le\liminf_{k\to\infty}\int_{E_k} \frac{f^{m+1}_k(u_k-2\varepsilon)-f^{m+1}_k(u_k-2\varepsilon-\delta)}{(m+1)\delta f_k^{m}(u_k-2\delta)}\xi^2\,dx\notag\\
&\le C\delta+\int_{\Omega} |\nabla \xi|^2 \,dx\notag
\end{align}
for some constant $C$ independent of $\delta$ and $k$. Since 
\begin{equation}
\frac{f^{m+1}(t-2\varepsilon)-f^{m+1}(t-2\varepsilon-\delta)}{(m+1)\delta}\uparrow f^m(t-2\varepsilon)f'_{-}(t-2\varepsilon) \hspace{5mm} \text{as $\delta\downarrow0$ for $t<L_j$}\notag
\end{equation}
and
\begin{equation}
f^m(t-2\varepsilon)f'_{-}(t-2\varepsilon) \uparrow f^m(t)f'_{-}(t) \hspace{5mm} \text{as $\varepsilon\downarrow0$ for $t<L_j$}\notag,
\end{equation}
by letting first $\delta\to 0$ and then $\varepsilon\to0$ and $j\to\infty$, we get 
\begin{equation}
\int_{\{T<u\}} f'_{-}(u)\xi^2\, dx\le \int_{\Omega}|\nabla \xi|^2\,dx. \notag
\end{equation}
Since $f'_{-}(u)=0$ in $\{u\le T\}$, we get the result.
\end{proof}
\section{A Liouville-type result}
In this section, we provide a Liouville-type result. To begin with, we denote $B_r:=\{|x|<r\}$, $\mathbb{R}^{N}_{+}:=\mathbb{R}^{N}\cap \{x_N>0\}$, and $B_{r}^{+}:= B_r \cap \{x_N>0\}$.
The following proposition states a Liouville-type result. Thanks to Proposition \ref{closedness}, this proposition is proved as a modification of \cite[Proposition 6.3]{CFRS}. 
\begin{proposition}
\label{LT}
Assume $3\le N \le 9$ and let $m\in \mathbb{N}$. There exists a dimensional constant $\alpha_{N}>0$ such that the following holds.
Assume that a nonnegative function $u\in W^{1,2}_{\rm{loc}}(\overline{\mathbb{R}^{N}_{+}}) \cap C^{0}_{\rm{loc}}(\mathbb{R}^{N}_{+})$ satisfies $u_R \in S^{m}(B^{+}_{2})$ and $u_R = 0$ on $\partial B^{+}_{2}\cap\{x_N =0 \}$ in the trace sense for all $R\ge 1$, where $u_R (x):=u(Rx)$.
Suppose in addition that, for some $\alpha \in (0,\alpha_N)$ and $\gamma>0$, we have 
\begin{align}
\label{ass}
\lVert \nabla u_R \rVert_{L^{2+\gamma}(B^{+}_{3/2})} \le C_1 \lVert \nabla u_R \rVert_{L^{2}({B^{+}_2})}\le C_2 R^\alpha \hspace{2mm}\text{for all $R\ge1$}
\end{align}
with constants $C_1$ and $C_2$ independent of $R$, and that $u$ satisfies
\begin{equation}
\label{cp}
\int_{\mathbb{R}^{N}_{+}} (\{ (N-2)\eta +2x \cdot \nabla \eta \}\eta |\nabla u|^2 - 2(x\cdot \nabla{u})\nabla u\cdot \nabla(\eta^2)-|x\cdot \nabla u|^2|\nabla \eta|^2) \,dx \le 0 
\end{equation}
for all $\eta \in C^{0,1}_0 (\overline{\mathbb{R}^{N}_{+}})$. Then, $u\equiv 0$.
\end{proposition}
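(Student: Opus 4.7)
The plan is to adapt the blow-down scheme of \cite[Proposition 6.3]{CFRS}, with the new closedness result (Proposition \ref{closedness}) replacing its convex counterpart. The four main steps are: rescale $u$ to produce a sequence of stable solutions in $S^m$, extract a blow-down limit $v_\infty$ via Proposition \ref{closedness}, use the Pohozaev-type inequality (\ref{cp}) to force $0$-homogeneity of $v_\infty$, and apply a half-sphere rigidity argument valid for $3\le N\le 9$ to conclude $v_\infty\equiv 0$, which then propagates back to $u\equiv 0$.

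Concretely, I would set $v_R(x):=R^{-\alpha}u(Rx)$. Since $C^m$ is closed under the scaling $f\mapsto af(b\cdot)$, each $v_R$ is a stable solution of $-\Delta v_R = g_R(v_R)$ with $g_R(t)=R^{2-\alpha}f(R^\alpha t)\in C^m$. The scaling identity $\|\nabla v_R\|_{L^p(B_\rho^+)} = R^{1-\alpha-N/p}\|\nabla u\|_{L^p(B_{R\rho}^+)}$ combined with assumption (\ref{ass}) gives uniform $W^{1,2+\gamma}(B_\rho^+)$ bounds for $\{v_{R_k}\}$ along any sequence $R_k\to\infty$ and every fixed $\rho>0$. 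Applying Proposition \ref{closedness} successively on $B_\rho^+$ for $\rho=1,2,\ldots$ with a diagonal argument yields a subsequence converging in $W^{1,2}_{\rm loc}(\overline{\mathbb{R}^N_+})$ to some $v_\infty\in S^m_{\rm loc}(\mathbb{R}^N_+)$ with zero trace on $\partial\mathbb{R}^N_+$. This is the one place where $m$-convexity truly enters: although the nonlinearities $g_R$ depend on $R$, Proposition \ref{closedness} ensures that the limiting equation for $v_\infty$ is still stable with an $m$-convex right-hand side.

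The inequality (\ref{cp}) is invariant under $u\mapsto R^{-\alpha}u(R\cdot)$, hence passes to $v_\infty$. Testing against annular cut-offs $\eta=\eta_\rho(|x|)$ as in \cite[Section 6]{CFRS} shows that the strict bound $\alpha<\alpha_N$ forces $x\cdot\nabla v_\infty\equiv 0$, so $v_\infty(x)=\Phi(x/|x|)$ is $0$-homogeneous. For such a cone the stability inequality reduces to a Hardy-type eigenvalue estimate on the upper half-sphere $S^{N-1}_+$ which, in dimensions $3\le N\le 9$, forces $\Phi\equiv 0$; this is the standard half-space rigidity of \cite{CFRS} and uses no $m$-convexity. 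Since every blow-down limit vanishes, $R^{-\alpha}\|\nabla u_R\|_{L^2(B_\rho^+)}\to 0$, and the scale-invariant monotonicity contained in (\ref{cp}) then forces $\nabla u\equiv 0$, whence $u\equiv 0$ by the Dirichlet boundary condition.

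The main obstacle is the limit extraction: the blow-down produces $R$-dependent nonlinearities $g_R$, so one needs closedness of the $m$-convex stable class under joint passage to the limit of solutions and nonlinearities. This is exactly what Proposition \ref{closedness} is designed to provide, and is the reason that lemma was phrased so that the $f_k$ may vary with $k$. Once the $m$-convex closedness is in place, the Pohozaev monotonicity and half-sphere rigidity steps are essentially unchanged from \cite{CFRS}.
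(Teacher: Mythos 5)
The paper's proof of Proposition \ref{LT} does \emph{not} run a single blow-down to a limiting cone. Instead it follows the scheme of \cite[Proposition~6.3]{CFRS}: (i) plug an explicit radial cut-off into the Pohozaev inequality (\ref{cp}) to bound the radial derivative of $u_R$ in a half-ball by the full gradient in a half-annulus; (ii) prove, by a contradiction/compactness argument, the reverse estimate (\ref{RI}) under a doubling hypothesis $\int_{B_1^+}|\nabla u_R|^2\ge\tfrac12\int_{B_2^+}|\nabla u_R|^2$; (iii) combine these, iterate dyadically, and use the subcritical growth $\alpha<\alpha_N$ in (\ref{ass}) to conclude $x\cdot\nabla u_R\equiv 0$ on every scale, hence $u\equiv 0$. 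Proposition \ref{closedness} is invoked exactly once, inside step (ii), to pass to the limit in the normalized sequence $v_k=u_{R_k}/\|\nabla u_{R_k}\|_{L^2(B_{3/2}^+)}$ and show that the limit is a $0$-homogeneous stable solution (hence harmonic with zero nonlinearity), which is then killed by the maximum principle.

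Your proposal replaces this dichotomy-and-iteration structure with a direct blow-down $v_R=R^{-\alpha}u(R\cdot)$ and then claims that, once every blow-down limit $v_\infty$ is shown to vanish, ``the scale-invariant monotonicity contained in (\ref{cp}) then forces $\nabla u\equiv 0$.'' This last step is the genuine gap. Knowing $v_{R_k}\to 0$ in $W^{1,2}_{\rm loc}$ only says $R_k^{1-\alpha-N/2}\|\nabla u\|_{L^2(B_{R_k\rho}^+)}\to 0$, which is merely a slight sharpening of the bound already assumed in (\ref{ass}); it does not propagate back to $u\equiv 0$ without a genuine monotonicity formula, and (\ref{cp}) by itself is an inequality, not a monotonicity statement. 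The whole point of the compactness estimate (\ref{RI}) plus the dyadic iteration in the paper is precisely to bridge this gap: when the doubling condition fails, the gradient mass escapes to larger scales where (\ref{ass}) controls it, and when it holds, (\ref{RI}) converts the radial-derivative control from (\ref{cp}) into a decay of the full gradient. Your sketch of the degenerate case (what happens if $v_\infty\equiv 0$ along every sequence) is therefore missing the essential mechanism. As a secondary remark, the constraint $3\le N\le 9$ does not enter through a ``Hardy-type eigenvalue estimate on the hemisphere''---the rigidity of $0$-homogeneous harmonic functions vanishing on the equator is just the maximum principle and holds in all dimensions; the dimensional restriction is built into $\alpha_N$ via the Pohozaev/stability computation. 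Finally, although the extra normalization $v_R=R^{-\alpha}u_R$ is scale-consistent, the paper works directly with $u_R$ and normalizes by $\|\nabla u_{R_k}\|_{L^2(B_{3/2}^+)}$ inside the compactness lemma, which is what makes the limit nontrivial; your normalization can easily produce a trivial limit and then gives no information.
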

\begin{proof}
For the reader's convenience, we note the sketch of the proof as follows. We first use the appropriate function as a test function in (\ref{cp}) and have an inequality, which tells us that the radial derivative of $u$ in a half-ball is controlled by the gradient term in a half-annulus.
Next, we prove the following statement: there exists a dimensional constant $C>0$ such that if for some $R\ge1$ we have  
\begin{equation}
\int_{B_1^+}|\nabla u_R|^2\,dx \ge \frac{1}{2}\int_{B_2^+}|\nabla u_R|^2\,dx \notag,
\end{equation}
then
\begin{equation}
\label{RI}
\int_{B_{3/2}^+}|\nabla u_R|^2\,dx\le C\int_{B_{3/2}^+ \setminus B_{1}^+}|x|^{-N}|x\cdot\nabla u_R|^2\,dx. 
\end{equation}
This statement tells us that the gradient term in a half-ball is controlled by the radial derivative of $u$ in a half-annulus under the condition that the mass of $|\nabla u|^2$ in a half-annulus is not large with respect to that in a half-ball.
By combing the previous two results, using an iteration argument and controlling the gradient term by using (\ref{ass}), we have $x\cdot u_R\equiv 0$ in $B^{+}_{2}$ for all $R\ge1$. As a result, we have $u_R\equiv 0$ in $B^{+}_{2}$ for all $R\ge1$\footnote{This fact can be proved by imitating the following method.} and we get the result. Therefore, it is sufficient to prove (\ref{RI}).
      
In this proof, we assume by contradiction the existence of a sequence $v_k:=u_{R_k}/\lVert\nabla u_{R_k} \rVert_{L^2(B^{+}_{3/2})} \in W^{1,2}_{\rm{loc}}(\overline{\mathbb{R}^{N}_{+}})\cap C^{0}_{\rm{loc}}(\mathbb{R}^{N}_{+})$ with $v_k=0$ on $\partial B^+_2 \cap\{x_N=0\}$ such that
\begin{equation}
\int_{B_1^{+}}|\nabla v_k|^2\, dx  \ge \frac{1}{2}\int_{B_2^{+}}|\nabla v_k|^2\, dx, \notag
\end{equation}
\begin{equation}
\int_{B_{3/2}^{+}}|\nabla v_k|^2\, dx =1,\hspace{5mm}\text{and}\hspace{5mm} \int_{B_{3/2}^{+}\setminus B_1^{+}}|x|^{-N}|x\cdot\nabla v_k|^2 \,dx \to 0 \hspace{2mm} \text{as $k\to\infty$. \notag}
\end{equation}
Since 
\begin{equation}
\lVert \nabla v_k \rVert_{L^{2+\gamma}(B^{+}_{3/2})}\le C_1\lVert \nabla v_k \rVert_{L^{2}({B^{+}_2})}\le 2C_1 \notag
\end{equation}
and
\begin{equation}
\lVert  v_k \rVert_{L^{2+\gamma}({B^{+}_{3/2}})}\le C\lVert \nabla v_k \rVert_{L^{2+\gamma}({B^{+}_{3/2}})}\le C\hspace{2mm} \text{by the poincar\'e inequality}, \notag
\end{equation}
similary to the beginning of the proof of Proposition \ref{closedness}, We have $v_k\to v$ in $W^{1,2}(B_{3/2}^{+})$ for some $v\in W^{1,2}(B_{3/2}^{+})$ by taking a subsequence if necessary.
Then we have
\begin{equation}
\int_{B_{3/2}^{+}}|\nabla v|^2\,dx=1\hspace{5mm}\text{and} \hspace{5mm} x\cdot\nabla v\equiv0\hspace{2mm} \text{in}\hspace{2mm}B_{3/2}^{+}\setminus B_{1}^{+}.\notag
\end{equation}
Moreover, we verify $v\in S^m(B_{3/2}^{+})$ by using Proposition \ref{closedness} and $v=0$ on $\{x_N=0\}\cap B_{3/2}^+$ by the continuity of the trace operator.
In particular, $v$ is a weak solution of $-\Delta v = g(v)$ in $B^{+}_{3/2}$ with some $g\in C^m$.
Thanks to the $0$-homogenity of $v$, we know that $\Delta v$ is $(-2)$-homogeneous and $g(v)$ is $0$-homogeneous. Thus we have $g(v)\equiv 0$. In particular, $v$ is a harmonic $0$-homogeneous function in $B_{3/2}^{+}\setminus{B_{1}^+}$ satisfying $v=0$ on $\{x_N=0\}\cap B_{3/2}^+$. As a consequence, $v$ takes its infimum at an interior point. Thanks to the strong maximum principle, we get $v\equiv0$ in $B_{3/2}^{+}\setminus{B_{1}^+}$. Thanks to the superharmonicity of $v$ and the strong maximum principle, we have $v\equiv0$ in $B_{3/2}^{+}$. This result contradicts $\int_{B^{+}_{3/2}}|\nabla v|^2\,dx=1$.  
\end{proof}   
\section{A blow-up argument}
In this section, we provide a priori $L^\infty$ estimate of $u$ by using a blow-up argument. At first, we introduce the notion of a small deformation of a half-ball.
\begin{definition}[see \cite{CFRS}] Let $\vartheta\ge 0$. We define that a domain $\Omega\subset\mathbb{R}^N$ is a $\vartheta$-deformation of $B^{+}_2$ if $\Omega = \Phi(B^{+}_2)$
for some $\Phi \in C^3(B_2;\mathbb{R}^{N})$ such that $\Phi(0)=0$, $D\Phi(0)=\rm{Id}$, and
\begin{equation}
\lVert D^2 \Phi \rVert_{L^{\infty}(B_2)} +\lVert D^3 \Phi \rVert_{L^{\infty}(B_2)}\le\vartheta, \notag
\end{equation}
where the norms of $D^2 \Phi$ and $D^3 \Phi$ are computed with respect to the operator norm.
\end{definition}
We note that given a bounded $C^3$ domain, we can cover its boundary with finite small balls. Then by rescaling and rotating these balls, we can regard its boundary as a finite union of $\vartheta$-deformations of $B^{+}_2$ with $\vartheta$ sufficiently small.
Therefore, it is enough to provide an a priori $L^\infty$ bound only if $\Omega$ is a small deformation of $B^{+}_2$.

The following theorem states an a priori $L^\infty$ estimate of $u$ and we prove it by modifying the proof of \cite[Theorem 6.1]{CFRS}.

\begin{theorem} [see \cite{CFRS}]
\label{blowup}
Let $3\le N \le 9$, $0\le\vartheta\le\frac{1}{100}$, $m\in \mathbb{N}$, and $\Omega\subset\mathbb{R}^N$ be a $\vartheta$-deformation of $B^+_2$. Assume that $u\in C^2(\overline{\Omega \cap B_1})$ is a nonnegative stable solution of
\begin{equation}
\label{eqx}
-\Delta u=f(u)\hspace{5mm}\text{in $\Omega\cap B_1$}\hspace{5mm}\text{and}\hspace{5mm}u=0\hspace{5mm}\text{on $\partial \Omega \cap B_1$} \notag
\end{equation} 
for a positive, nondecreasing, $m$-convex, and superlinear function $f$. Then, there are some constants $\alpha=\alpha(m,N)>0$ and $C=C(m,N)>0$ such that
\begin{equation}
\lVert u \rVert_{C^{\alpha}(\overline{\Omega}\cap B_{1/2})}\le C \lVert u \rVert_{L^1(\Omega\cap B_1)}.\notag
\end{equation}
\end{theorem}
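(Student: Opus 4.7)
The plan is to execute the blow-up argument of \cite[Theorem 6.1]{CFRS} with the two convexity-based ingredients there --- the closedness of stable solutions and the half-space Liouville theorem --- replaced by Proposition \ref{closedness} and Proposition \ref{LT}, both of which we have already established for the class $C^m$. Assume the conclusion fails. Then for each $k$ there exist a $\vartheta$-deformation $\Omega_k$ of $B_2^+$, an admissible $m$-convex nonlinearity $f_k$, and a nonnegative classical stable solution $u_k$ of the corresponding problem with
\[
\lVert u_k \rVert_{C^\alpha(\overline{\Omega_k}\cap B_{1/2})} > k\,\lVert u_k \rVert_{L^1(\Omega_k\cap B_1)}.
\]
Normalizing $\lVert u_k \rVert_{L^1}=1$ and running a doubling-type point-selection on the $C^\alpha$-seminorm as in \cite[Section 6]{CFRS}, one extracts base points $x_k$, radii $r_k\downarrow 0$, and scales $M_k$ so that $v_k(y):=M_k^{-1}u_k(x_k+r_k y)$ is a stable solution of $-\Delta v_k=\widetilde f_k(v_k)$ with $\widetilde f_k(t):=M_k^{-1}r_k^2 f_k(M_k t)$ on rescaled domains $\widetilde \Omega_k$ which, after flattening by the $C^3$ diffeomorphism $\Phi$, exhaust either $\mathbb R^N$ (interior case) or $\mathbb R^N_+$ (boundary case), and that carries a non-degenerate $C^\alpha$-normalization at the origin.

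The critical observation is that the scaling $f\mapsto a f(b\cdot)$ keeps $C^m$ invariant, as recorded in Section 2; thus every $\widetilde f_k$ lies in $C^m$. This is precisely the feature that allows the passage from convex to $m$-convex at the blow-up step. Moreover, the interior $L^\infty$ estimate and the global $W^{1,2+\gamma}$ bound for classical stable solutions from \cite{CFRS} --- neither of which uses any convexity assumption on the nonlinearity --- yield uniform $W^{1,2+\gamma}$ bounds on the $v_k$ over compact subsets of the limit domain, so that the growth hypothesis (\ref{ass}) of Proposition \ref{LT} is available for the subsequential limit.

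Focusing on the boundary case (the interior case reduces to a Liouville problem on $\mathbb R^N$ handled exactly as in \cite{CFRS}), Proposition \ref{closedness} extracts a $W^{1,2}_{\rm{loc}}$-subsequential limit $v\in S^m(\mathbb R^N_+)$; the boundary condition $v=0$ on $\{x_N=0\}$ persists by continuity of the trace, and the $C^\alpha$-normalization persists by Arzel\`a--Ascoli, so $v\not\equiv 0$. The Pohozaev-stability inequality (\ref{cp}) for $v$ is derived from the stability inequality and the equation $-\Delta v=g(v)$ by testing against a suitable cut-off of $x\cdot\nabla v$; this derivation is identical to the one in \cite{CFRS} and does not interact with the convexity structure of $g$. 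Proposition \ref{LT} then forces $v\equiv 0$, a contradiction. The main obstacle is the bookkeeping at the rescaling step: one must tune $M_k$ and $r_k$ so that the $C^\alpha$-normalization, the membership $\widetilde f_k\in C^m$, and the growth hypothesis (\ref{ass}) all pass to the limit, while the boundary flattening by $\Phi$ only produces lower-order perturbations that are absorbed by the $W^{1,2+\gamma}$ bounds. Once these are arranged as in \cite[Theorem 6.1]{CFRS}, the conclusion follows.
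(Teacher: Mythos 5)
Your proposal is correct and follows essentially the same route as the paper: both argue that the CFRS blow-up proof (\cite[Theorem 6.1]{CFRS}) uses convexity of $f$ only through the closedness result and the half-space Liouville theorem, and both substitute Proposition~\ref{closedness} and Proposition~\ref{LT} at exactly those two points. The paper's own proof is in fact terser, simply asserting the substitution, while you additionally make explicit the supporting facts (scale invariance of $C^m$, and that the interior $L^\infty$ and $W^{1,2+\gamma}$ estimates require no convexity) that justify why the rest of the argument carries over unchanged.
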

\begin{proof}
In the proof of \cite[Theorem 6.1]{CFRS}, the convexity of $f$ is used only to apply the closedness result for $S^1(\Omega)$ and a Liouville-type result.
By replacing $S^1(\Omega)$ with $S^m(\Omega)$ and using Proposition \ref{closedness} and Proposition \ref{LT}, We can prove this theorem.
\end{proof}
\begin{proof}[Proof of Theorem \ref{th1} and Corollary \ref{cor}] 
Let $u_\lambda$ be the minimal solution of (\ref{gelfandf}) for some $\lambda<\lambda^*$ and $f\in C^m$. Since $C^m$ is closed under scaling, by applying Theorem \ref{blowup}, covering argument, and an interior $L^{\infty}$ estimate of $u_\lambda$ (see \cite[Theorem 1.2]{CFRS}), we get
\begin{equation} 
\lVert u_{\lambda} \rVert_{L^{\infty}(\Omega)}\le C \lVert u_{\lambda} \rVert_{L^1(\Omega)} \notag
\end{equation}
for some constant $C>0$ dependent only on $N$, $m$, and $\Omega$.
By applying the monotone convergence theorem, we prove Theorem \ref{th1}.

If $f\in C^2(\mathbb{R})$ satisfies (\ref{ratio}), there exists a large number $m\in \mathbb{N}$ such that $f''f+(m-1)f'^2>0$. Since
\begin{equation}
(f^m)''=mf^{m-2}((m-1)f'^2+f''f)>0,\notag
\end{equation}
the function $f$ is $m$-convex and this result follows from Theorem \ref{th1}.
\end{proof}
\section*{Acknowledgments}
The author would like to thank my supervisor, Associate Professor Michiaki Onodera, for his valuable advice.
\bibliographystyle{plain}
\bibliography{regularity_of_extremal_solutions}
\end{document}